\documentclass
[
12pt,leqno]{amsart}
\usepackage{amsmath,amsfonts,amsthm,amssymb,dsfont}
\usepackage[alphabetic]{amsrefs}
\usepackage[OT4]{fontenc}
\usepackage{enumerate}
\usepackage{ifthen}
\newcommand{\showcomments}{yes}
\renewcommand{\showcomments}{no}

\long\def\symbolfootnote[#1]#2{\begingroup%
\def\thefootnote{\fnsymbol{footnote}}\footnote[#1]{#2}\endgroup}

\newtheorem{theorem}{Theorem}[section]
\newtheorem{lemma}[theorem]{Lemma}
\newtheorem{lem}[theorem]{Lemma}
\newtheorem{thm}[theorem]{Theorem}

\newtheorem{prop}[theorem]{Proposition}

\newtheorem{cor}[theorem]{Corollary}

\theoremstyle{definition}

\newtheorem{defin}[theorem]{Definition}

\newcommand{\Z}{\mathbb{Z}}

\newsavebox{\commentbox}
%
{\ifthenelse{\equal{\showcomments}{yes}}%
{\footnotemark
        \begin{lrbox}{\commentbox}
        \begin{minipage}[t]{1.25in}\raggedright\sffamily\tiny
        \footnotemark[\arabic{footnote}]}
{\begin{lrbox}{\commentbox}}}%
{\ifthenelse{\equal{\showcomments}{yes}}%
{\end{minipage}\end{lrbox}\marginpar{\usebox{\commentbox}}}
{\end{lrbox}}}

\setlength{\textwidth}{5.8in}
\setlength{\textheight}{8in}
\hoffset=-.37in

\begin{document}

\title{Separability of embedded surfaces in $3$--manifolds}

\author[P.~Przytycki]{Piotr Przytycki$^\dag$}
\address{Institute of Mathematics, Polish Academy of Sciences\\
 \'Sniadeckich 8, 00-956 Warsaw, Poland\\}
\email{pprzytyc@mimuw.edu.pl}
\thanks{$\dag$ Partially supported by MNiSW grant N201 012 32/0718 and the Foundation for Polish Science.}

\author[D.~T.~Wise]{Daniel T. Wise$^\ddag$}
           \address{Dept. of Math. \& Stats.\\
                    McGill Univ. \\
                    Montreal, QC, Canada H3A 0B9 }
           \email{wise@math.mcgill.ca}
\thanks{$\ddag$ Supported by NSERC}

\date{\today}

\maketitle

\begin{abstract}
We prove that if $S$ is a properly embedded $\pi_1$--injective surface
in a compact $3$--manifold $M$, then $\pi_1S$ is separable in $\pi_1M$.
\end{abstract}

\section{Introduction}

A subgroup $H\subset G$ is \emph{separable} if $H$ equals the intersection of finite index subgroups of $G$ containing $H$.
Scott proved that if $G=\pi_1M$ for a manifold $M$ with universal cover $\widetilde{M}$, then $H$ is separable if and only if each compact subset of $H\backslash \widetilde{M}$ embeds in an intermediate finite cover of $M$
\cite[Lem 1.4]{Sc}.
Thus, if $H=\pi_1S$ for a compact surface $S\subset H\backslash \widetilde{M}$,
then separability of $H$ implies that $S$ embeds in a finite cover of $M$.
Rubinstein--Wang found a properly immersed $\pi_1$--injective surface $S\looparrowright M$ in a graph manifold  such that $S$ does not lift to an embedding in a finite cover of $M$,
and they deduced that $\pi_1S\subset \pi_1M$ is not separable \cite[Ex 2.6]{RW}.

The objective of this paper is to prove:

\begin{thm}
\label{thm:main}
Let $M$ be a compact connected $3$--manifold and let $S\subset M$ be a properly embedded connected $\pi_1$--injective surface. Then $\pi_1S$ is separable in $\pi_1M$.
\end{thm}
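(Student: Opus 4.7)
My plan is to combine the JSJ decomposition of $M$ with the separability results available for the fundamental groups of the geometric pieces, and then assemble compatible finite covers by a gluing argument that uses the embedded hypothesis in an essential way. By Scott's criterion recalled in the introduction, it suffices to show that every compact neighborhood $K$ of $S$ in the cover $\pi_1S\backslash\widetilde M$ embeds in an intermediate finite cover $\widehat M\to M$; equivalently, I want finite covers of $M$ into which $S$ lifts as an embedded surface, with arbitrarily prescribed collar data on each side.

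First I would perform the standard reductions: pass to the orientation double cover to assume $M$ is orientable; apply the sphere decomposition to reduce to $M$ irreducible; dispose of the trivial cases where $S$ is a sphere or disk; and pass to a further double cover if $S$ is one-sided. Then I would isotope $S$ into minimal position with respect to a JSJ torus collection $\mathcal T$, so that $S\cap\mathcal T$ is a collection of essential simple closed curves cutting $S$ into pieces $S_i$, each properly embedded and $\pi_1$-injective in a JSJ block $M_i$. Each $M_i$ is either Seifert fibered or has interior admitting a complete finite-volume hyperbolic structure.

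Next I would establish separability of $\pi_1S_i$ in $\pi_1M_i$ for every block, producing a finite cover $\widehat M_i\to M_i$ in which $S_i$ lifts to an embedded surface. In a Seifert fibered block this follows from Scott's theorem that every finitely generated subgroup of a Seifert $3$--manifold group is subgroup separable. In a hyperbolic block, $\pi_1M_i$ is virtually compact special by the Wise--Agol theorems, and $\pi_1 S_i$ is a relatively quasiconvex subgroup (even when $S_i$ is a virtual fiber, one has separability by a direct argument), hence separable.

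The core difficulty, which I expect to absorb most of the work, is the gluing: the local covers $\widehat M_i$ must induce \emph{compatible} covers of each JSJ torus from its two adjacent sides so that they can be spliced into a single finite cover of $M$ in which all the $S_i$ simultaneously lift to pieces of a connected embedded surface. This forces me to upgrade each block-level separability statement into control over cosets of the peripheral $\mathbb{Z}^2$ subgroups, and then to match the induced finite quotients of $\pi_1T$ on the two sides of each JSJ torus $T$. The embedded hypothesis enters decisively here: it guarantees that the slopes of $S\cap T$ form a single coherent pattern in $T$ rather than two disjoint patterns, so the boundary covers admit a common refinement. The Rubinstein--Wang example cited above shows that without this coherence the matching can genuinely fail, so it is precisely this gluing step that must exploit embeddedness.
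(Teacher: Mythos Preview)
Your outline matches the paper's architecture almost exactly: the same reductions, the same block-by-block analysis against the JSJ decomposition, and the same recognition that the gluing of block-level covers along the JSJ tori is where all the work lies. So the approach is right. But what you have written is a plan, not a proof: you explicitly defer the gluing step, and the tools you invoke are not sharp enough to carry it out.

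Two concrete gaps. First, separability of $\pi_1S_i$ in $\pi_1M_i$ is not enough to control what happens at the boundary tori. What the paper actually uses (and what you gesture at with ``control over cosets of the peripheral $\mathbb{Z}^2$ subgroups'') is separability of the \emph{double cosets} $\pi_1S_0\cdot\pi_1T_i$ in $\pi_1M_0$, for each boundary torus $T_i$ meeting $S_0$. This is what lets you distinguish, in a finite cover, the torus through which a bad path $\gamma$ exits from the tori that $S$ itself crosses. For hyperbolic blocks this comes from relative quasiconvexity plus the double-coset separability theorem for virtually special groups; for graph-manifold blocks it is a separate theorem. You should name this explicitly.

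Second, even with double-coset separability in hand, you still have to splice the block covers into a global finite cover of $M$, and your proposal does not say how. The paper introduces \emph{semicovers} (maps that are honest covers over each open block and each transitional torus but may fail to cover at some boundary tori) and builds an $S$--injective finite semicover $\overline M$ in which $S$ lifts and meets each block connectedly, with uniform torus-degrees arranged via a common multiple $n$. Promoting $\overline M$ to a genuine finite cover of $M$ is a separate step and relies on an \emph{omnipotence} result for hyperbolic blocks: given any prescribed finite covers of the boundary tori (factoring through fixed $\widehat T$), there is a finite cover of the block inducing exactly those on the boundary. Without this you cannot force the two sides of a transitional torus to agree. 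Your remark that embeddedness makes the slopes on $T$ ``a single coherent pattern'' is true but does not by itself produce the matching; the matching comes from omnipotence on the hyperbolic side and the explicit product structure on the (simple) Seifert side.

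Finally, a small correction: in a hyperbolic block a component $S_i$ can be a virtual fiber, in which case $\pi_1S_i$ is not relatively quasiconvex. The paper handles this case separately by taking cyclic covers in the fiber direction; your parenthetical ``direct argument'' should be made explicit, since the quasiconvexity route genuinely fails there.
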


The problem of separability of an embedded surface subgroup was raised for instance by Silver--Williams ---
see \cite{SWil} and the references therein to their earlier works.
The Silver--Williams conjecture was resolved recently by Friedl--Vidussi in
\cite{FV}, who proved that $\pi_1S$ can be separated from
some element in $[\pi_1M,\pi_1M]-~\pi_1S$ whenever $\pi_1S$ is not
a fiber.

We proved Theorem~\ref{thm:main} when $M$ is a graph manifold in \cite[Thm 1.1]{PW1}.
Theorem~\ref{thm:main} was also proven when $M$ is hyperbolic \cite{Hier}.
In fact, every finitely generated subgroup of $\pi_1 M$ is separable
for hyperbolic $M$, by \cite{Hier} in the case $\partial M\neq \emptyset$ and by Agol's theorem \cite{Ahak} for $M$ closed.

\medskip

\textbf{Overview:}
In Section~\ref{sec:reduction} we introduce the basic notation and reduce to studying irreducible
$M$ that is \emph{simple} in the sense that its Seifert-fibred components are products with base surfaces of sufficient complexity.
In Section~\ref{sec:high} we prove a topological result establishing separability
of finite \emph{semicovers} of $M$, i.e.\ maps required to be covers only over the interior of the blocks of the JSJ decomposition.
This requires an omnipotence result for hyperbolic manifolds with boundary \cite[Cor 16.15]{Hier} coming from virtual specialness.

To prove Theorem~\ref{thm:main} we enhance the strategy employed in \cite[Thm 1.1]{PW1} for graph manifolds.
Its main element was \cite[Constr 4.13]{PW1} which produced \emph{$S$--injective} covers of $M^g$, which are covers $\overline {M^g}$ to which $S$ lifts
and, among other properties, such that the intersection with $S$ is connected for each JSJ torus or component of $\overline {M^g}$.
We extend the construction of $S$--injective semicovers to all compact $3$--manifolds in Section~\ref{sec:injective}. We use the double coset
separability of  relatively quasiconvex subgroups of $\pi_1$ of hyperbolic $3$--manifolds with boundary \cite[Thm 16.23]{Hier} and
separability of double cosets of embedded surface subgroups
of $\pi_1$ of graph manifolds \cite[Thm 1.2]{PW1}.

We conclude with the proof of Theorem~\ref{thm:main} in Section~\ref{sec:separability}.
\medskip

\textbf{Acknowledgement:}
We thank Henry Wilton for discussions.

\section{Framework and Reductions} \label{sec:reduction}

{\bf Separability:}
We have the following finite index maneuverability: If $[H:H']<\infty$ and $H'\subset G$ is separable, then $H\subset G$ is separable. Moreover, if $[G:G']<\infty$ then a subgroup $H'\subset G'$ is separable if and only if $H'\subset G$ is separable. Finally, $H\subset G$ is separable if and only if for each $g\in G-H$ there is a finite quotient $\phi\colon G\rightarrow F$ with $\phi(g)\notin \phi(H)$. Thus $G$ is \emph{residually finite} when $\{1_G\}$ is separable. We will freely employ these statements.

\medskip

{\bf Assumptions on $M$ and $S$:}
Throughout this article $M$ is a compact connected $3$--manifold and might have nonempty boundary.
We will make additional assumptions arising from the following reductions:

We can assume that $S$ is not a sphere or a disc, since otherwise Theorem~\ref{thm:main} follows from Hempel's residual finiteness of Haken $3$--manifolds \cite{He} and Perelman's hyperbolization. By passing to a double cover we can assume that $M$ is oriented. Furthermore, if $S$ is not orientable, then the boundary $\widehat S$ of its tubular neighborhood is an oriented $\pi_1$--injective surface.
As $[\pi_1S:\pi_1\widehat S]=2$, the separability of $\pi_1\widehat S$ implies separability of $\pi_1S$. Hence we can assume that $S$ is oriented. In the presence of our assumptions, the $\pi_1$--injectivity of $S$ is equivalent to saying that $S$ is \emph{incompressible} and we will stay with this term.

\medskip

{\bf Decomposition of $M$ into blocks:}
An incompressible surface $S$ in a reducible manifold can be homotoped into one of its prime factors, say $M_0$.
Observe that there is a retraction $\pi_1M\rightarrow \pi_1M_0$ that kills the other factors. Consequently, if $g\in \pi_1M_0-\pi_1S$, and we can separate $g$ from $\pi_1S$ in a finite quotient of $\pi_1M_0$, then  we can separate $g$ from $\pi_1S$ in a finite quotient of $\pi_1M$. If $g\in \pi_1M-\pi_1M_0$, then applying \cite{He} to the factors we can find a finite cover $M'$ of $M$ where all the terms of the normal form of $g$ lie outside factor subgroups.
Then the path representing $g$ is nontrivial in the graph dual to the prime decomposition of $M'$, and it suffices to use the residual finiteness of free groups. Hence we can assume that $M$ is irreducible (though possibly $\partial$--reducible).

We will employ the \emph{JSJ decomposition} of $M$, which is the minimal collection of incompressible tori (up to isotopy) each of whose complementary
components is Seifert-fibred or atoroidal. If $M$ is a single Seifert-fibred manifold, then all finitely generated subgroups of $\pi_1M$ are separable \cite{Sc}, so we can assume that $M$ is not Seifert-fibred.

By passing to a double cover we can assume that there are no $\pi_1$--injective Klein bottles in $M$. We can also assume that $M$ is not a torus bundle over the circle, since then the only embedded surfaces are the fibers. Now a complementary component of JSJ tori cannot be simultaneously Seifert-fibred and algebraically atoroidal.
Algebraically atoroidal components are \emph{hyperbolic} by hyperbolization, in other words, their interior carries a geometrically finite hyperbolic structure (possibly of infinite volume if there are non-toroidal boundary components, as in a handlebody). We will call these
complementary components \emph{hyperbolic blocks}. The other complementary components
are Seifert-fibred and we assemble adjacent Seifert-fibred components into \emph{graph manifold blocks}. The JSJ tori that are adjacent to at least one hyperbolic block are called \emph{transitional}. An incompressible surface can be homotoped so that its intersection with each block is incompressible. Moreover, we can assume that $S$ intersects each Seifert-fibred component along a horizontal or vertical surface, unless $S$ is a $\partial$--parallel annulus.
In the latter case separability follows easily from separability of the boundary torus (since it corresponds to a maximal abelian subgroup) and from a variant of Lemma~\ref{lem:Hempel} with $T^*$ in the boundary.

\medskip

{\bf The $m$--characteristic covers and simplicity:}
For a manifold $E$ let $E_{[m]}$ denote the \emph{$m$--characteristic} cover of $E$,
which is the regular cover corresponding to the intersection of all subgroups
of index $m$ in $\pi_1E$.
In particular, if $T$ is a torus, then $T_{[m]}$ is the cover corresponding to the subgroup $m\Z\times m\Z\subset \Z\times \Z=\pi_1T$.
A Seifert-fibred manifold $E$ is \emph{simple} if it is the product of the circle with a surface of genus~$\geq 1$
that has at least $2$~boundary components.
This boundary hypothesis
(not required in \cite{PW1})
ensures that there is a retraction onto each boundary component.
Consequently, $E_{[m]}$ restricts to $m$--characteristic covers on boundary tori.
An irreducible $3$--manifold $M$ is
\emph{simple} if
its Seifert-fibred components are simple.
We will pass to a simple finite cover of $M$ in Lemma~\ref{lem:Hempel}.

\section{Extending semicovers to covers}
\label{sec:high}
We begin this section with the following additional simplification:

\begin{lem}
\label{lem:Hempel}
Let $M$ be an irreducible $3$--manifold that is not Seifert-fibred.
Then $M$ has a finite cover $M'$ that is simple. Moreover, given covers $\{T^*\}$ of the transitional tori $\{T\}$ in $M$,
we can assume that all the tori of $M'$ covering $T$ are isomorphic and factor through $T^*$.
\end{lem}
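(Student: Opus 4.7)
The strategy is to construct $M'$ as the finite regular cover corresponding to a carefully chosen normal finite-index subgroup $N \subset \pi_1 M$. The main inputs are Hempel's theorem \cite{He} ensuring $\pi_1 M$ is residually finite (together with geometrization for the irreducible non-Haken case), and the \emph{extension property} for toral subgroups: for each toral subgroup $\pi_1 T \cong \mathbb{Z}^2$ arising as a JSJ or boundary torus of $M$ and each finite-index $K \subset \pi_1 T$, there is a finite-index $N \subset \pi_1 M$ with $N \cap \pi_1 T \subset K$. This extension property is standard for $3$-manifold groups and follows from separability of toral subgroups.

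Fix a large integer $m$ to be specified. For each JSJ torus or boundary torus $T$ of $M$, prescribe a finite-index subgroup $K_T \subset \pi_1 T$ as follows: if $T$ is transitional, set $K_T := \pi_1 T^* \cap m\pi_1 T$; otherwise set $K_T := m\pi_1 T$. By the extension property and the finiteness of the collection of tori, there is a finite-index subgroup $N_0 \subset \pi_1 M$ with $N_0 \cap \pi_1 T \subset K_T$ for every $T$; let $N$ be its normal core in $\pi_1 M$. Set $M' := N \backslash \widetilde M$. Normality of $N$ guarantees that all preimage tori of each $T$ in $M'$ are isomorphic as covers of $T$, and because $K_T \subset \pi_1 T^*$ on transitional tori, each such preimage factors through $T^*$.

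To verify that $M'$ is simple, pick any Seifert-fibred block $B \subset M$ with base orbifold $O$. By construction, each boundary torus of $B$ lifts in $M'$ to tori whose covering degree is at least $m$ along both the fiber and the base-boundary directions. For $m$ divisible by the orders of all cone points of $O$ and by the torsion exponent of the fiber class, the induced cover of $B$ trivializes all Seifert and orbifold data and becomes a product $S^1 \times \Sigma$. Each boundary circle of $O$ contributes $\geq m \geq 2$ lifts, so $|\partial \Sigma| \geq 2$. For the genus, Riemann--Hurwitz applied to the cover of the underlying surface of $O$ forces positive genus once the covering degree is sufficiently large, ruling out only the degenerate cases where $O$ is a disk or annulus, both incompatible with our earlier reductions (excluding Seifert-fibred $M$ and torus-bundle $M$).

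The main obstacle is realizing all of these conditions simultaneously across every Seifert-fibred block from a single choice of $m$, while also factoring through the prescribed $T^*$ at transitional tori. Since there are only finitely many blocks and finitely many tori, a common $m$ large enough to serve all the product-structure, boundary-count, and genus requirements exists; the constraint $K_T \subset m\pi_1 T$ on transitional tori is compatible with $K_T \subset \pi_1 T^*$ because the intersection $\pi_1 T^* \cap m\pi_1 T$ still has finite index in $\pi_1 T$. Combining these choices yields the desired simple cover $M'$ with the prescribed transitional-torus behavior.
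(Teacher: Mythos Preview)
Your route is quite different from the paper's. The paper builds $M'$ \emph{block by block}: it first invokes Luecke--Wu \cite[Prop~4.4]{LW} to obtain a simple regular cover ${M^g}'$ of each graph manifold block, then uses the omnipotence result Lemma~\ref{thm:hyp_covers} to produce regular covers ${M^h}'$ of the hyperbolic blocks whose boundary tori are prescribed $m$--characteristic covers matching those of the ${M^g}'$, and finally glues copies of these block covers. The simplicity of the Seifert pieces is thus imported wholesale from \cite{LW}, not deduced from any condition on the JSJ tori.

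Your attempt to get $M'$ as a single regular cover determined by a normal $N\lhd\pi_1M$ with prescribed intersections $N\cap\pi_1T$ has a genuine gap in the simplicity verification. Controlling $N\cap\pi_1T$ on the boundary tori of a Seifert piece $B$ does \emph{not} force the cone points of the base orbifold $O$ to unwrap: killing a cone point $c_j$ of order $\alpha_j$ requires the image of $c_j$ in the deck group of the induced base cover to have order exactly $\alpha_j$, and this is a condition on the interior generator $c_j$, not on the boundary generators. Concretely, take $O=D^2(3,3,4)$ and the surjection $\pi_1^{orb}O\to\Z/12$ sending $c_1\mapsto 0,\ c_2\mapsto 4,\ c_3\mapsto 3$; the boundary loop maps to a generator, so the degree--$12$ cover has a single boundary circle of degree $12$ yet retains twelve cone points of order $3$. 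This can be promoted to a regular cover of the Seifert piece meeting your torus constraint for $m=12$ while failing to be a product, so ``$m$ divisible by the cone orders'' together with the boundary condition is not enough. Relatedly, the sentence ``each boundary circle of $O$ contributes $\geq m\geq 2$ lifts'' has the inequality backwards: a boundary circle covered with per-component degree $\geq m$ contributes at most $(\text{total degree})/m$ lifts, and the same example shows $|\partial\Sigma|=1$ can occur. Finally, Seifert JSJ pieces over a disk with cone points (one boundary torus) are \emph{not} excluded by the reductions you cite, so you cannot dismiss that case. Repairing your approach would require imposing extra constraints at every exceptional fibre, which is essentially the content of the Luecke--Wu argument that the paper invokes directly.
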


A key element of the proof employs the following omnipotence result for hyperbolic $3$--manifolds with boundary.

\begin{lem}[{\cite[Cor 16.15]{Hier}}]
\label{thm:hyp_covers}
Let $M^h$ be a hyperbolic $3$--manifold with boundary tori $\{T\}$. There exist finite covers
$\{\widehat{T}\}$ such that for any further finite covers $\{T'\}$ there
exists a finite cover ${M^h}'$ of $M^h$ that restricts on boundary tori to $\{T'\}$.
\end{lem}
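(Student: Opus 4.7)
The plan is to invoke Wise's Hierarchy Theorem as the main tool. The relevant outputs are the virtual compact specialness of $\pi_1 M^h$ for a cusped hyperbolic $3$--manifold, and a resulting simultaneous peripheral omnipotence in the specialized cover that prescribes the boundary behavior of further finite covers.

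I would first apply the Hierarchy Theorem to $\pi_1 M^h$: since $M^h$ is a hyperbolic $3$--manifold with (toroidal or higher-genus) boundary, it is Haken and its fundamental group is relatively hyperbolic with abelian peripheral subgroups, hence virtually compact special. Pass to a finite-index subgroup $\Gamma\subset \pi_1 M^h$ acting cocompactly and specially on a CAT(0) cube complex with the peripheral $\Z^2$ subgroups realized as stabilizers of totally geodesic subcomplexes. Let $\widehat M^h\to M^h$ be the finite cover corresponding to $\Gamma$, with boundary tori $\{\widehat T\}$; these are the covers asserted by the lemma.

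Next I would extract the following simultaneous peripheral omnipotence: for any choice of finite-index subgroups $H\subset \pi_1 \widehat T$, one for each boundary torus, there is a finite-index subgroup $K\subset \Gamma$ with $K\cap \pi_1 \widehat T = H$ for every $\widehat T$ simultaneously. Each peripheral $\Z^2$ is a virtual retract of $\Gamma$ (its associated subcomplex is convex and separable in the special cube complex), and the canonical completion machinery of the Hierarchy paper allows these retractions to be coordinated across all cusps. Applying this to the subgroups $H$ corresponding to the prescribed further covers $\{T'\}$ of $\{\widehat T\}$ produces $K$, and the cover $M^{h'}\to \widehat M^h\to M^h$ associated to $K$ restricts to $T'$ on each boundary torus, as required.

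The main obstacle is precisely this simultaneous peripheral control. Individual virtual retraction onto a single peripheral $\Z^2$ follows relatively straightforwardly from specialness combined with convex separability, but coordinating all boundary tori at once requires the full relatively hyperbolic / malnormal quasiconvex hierarchy together with the controlled gluing of cubical covers developed in the Hierarchy; this is exactly the content of Cor 16.15 of that paper, and any proof of the present lemma has to go through that machinery rather than an elementary argument.
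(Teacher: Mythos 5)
This statement is not proved in the paper at all: it is quoted verbatim as an external input, namely \cite[Cor 16.15]{Hier}, and the only thing the paper adds is the remark that one may pass to a further cover to make ${M^h}'\rightarrow M^h$ regular. So there is no internal proof to compare against; what you have written is a reconstruction of the argument in the cited source, and as such it identifies the right ingredients (virtual compact specialness of cusped hyperbolic $3$--manifold groups, separability and virtual retraction onto the convex peripheral $\Z^2$'s via canonical completion).

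One step of your sketch is glossed over in a way that matters if you intended it as an actual proof rather than a pointer. First, producing $K\subset\Gamma$ of finite index with $K\cap\pi_1\widehat T=H$ for a single chosen conjugate of each peripheral subgroup does not yet give a cover that ``restricts on boundary tori to $\{T'\}$'': the boundary components of the cover corresponding to $K$ over $\widehat T$ are governed by all the intersections $K\cap g\pi_1\widehat T g^{-1}$, and these can be proper subgroups of $gHg^{-1}$. Second, the obvious fix --- take the retraction $r_i\colon\Gamma\to\pi_1\widehat T_i$ for each cusp, set $K_i=r_i^{-1}(H_i)$, and intersect --- destroys exactness: $K_1\cap K_2$ may meet $\pi_1\widehat T_1$ in a proper subgroup of $H_1$, so the resulting cover restricts to a further cover of $T'_1$ rather than to $T'_1$ itself. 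This is precisely the ``omnipotence'' content of Cor 16.15, and it is not a formal consequence of having each peripheral subgroup individually as a virtual retract; you acknowledge this and defer to the corollary, which is legitimate for the purposes of this paper (which does the same), but it means your text is an annotated citation rather than a proof.
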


By passing to a further cover we can assume that ${M^h}'\rightarrow M^h$ is regular.

\begin{proof}[Proof of Lemma~\ref{lem:Hempel}]
Luecke and Wu proved in \cite[Prop 4.4]{LW} that every graph manifold block $M^g$ of $M$ has a finite cover ${M^g}'$ that is simple. Without loss of generality
we can assume that ${M^g}'\rightarrow {M^g}$ is regular.

Choose $m$ such that
\begin{enumerate}[(i)]
\item
for any ${M^g}$ adjacent along a torus $T$ to a hyperbolic block $M^h$,
the cover $T'_{[m]}$ of the torus $T'\subset \partial {M^g}'$ covering $T$ factors
through $\widehat{T}$ of Lemma~\ref{thm:hyp_covers} and through $T^*$.
\item
for a transitional or boundary torus $T\subset M$ adjacent to a hyperbolic
block $M^h$ but not to a graph manifold block, the cover $T_{[m]}$ factors through
$\widehat{T}$ of Lemma~\ref{thm:hyp_covers} and through $T^*$, if $T$ is transitional.
\end{enumerate}

By Lemma~\ref{thm:hyp_covers}, each hyperbolic block $M^h$ of $M$ has a finite regular cover ${M^h}'$ restricting on the boundary to $\{T'_{[m]}\}$ of (i) or $\{T_{[m]}\}$ of (ii). For a Seifert-fibred component $E$ of one of the simple graph manifolds ${M^g}'$,
as $E$ is simple its retractive property ensures that
the cover $E_{[m]}$ restricts to $m$--characteristic covers on its boundary tori. Gluing appropriately many copies of the various $E_{[m]}$ and ${M^h}'$ together provides the desired
simple cover $M'$ of $M$.
\end{proof}

Henceforth we \textbf{always} assume that $M$ is simple (and
irreducible as assumed in Section~\ref{sec:reduction}).

\begin{defin}
A \emph{semicover} $\overline{M}$ of $M$ is a local embedding $\overline{M}\rightarrow M$
that restricts to a covering map over each transitional torus and over each open block.
Thus $\overline{M}$ can only fail to be a covering map at a component of $\partial\overline{M}$
that covers a transitional torus $T\subset M$. We say that $\overline{M}\rightarrow M$ is \emph{finite} if $\overline{M}$ is compact.
\end{defin}

We can now prove the main result of this section.

\begin{prop}
\label{prop:partial}
Any finite semicover $\overline M$ of $M$ has a finite cover $\overline M'\rightarrow \overline M$ that embeds in a finite cover $M'$ of $M$.
\end{prop}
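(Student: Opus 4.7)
The strategy is to construct a finite cover $M' \to M$ such that, for each block $B$ of $M$, there is a cover $\widetilde B \to B$ appearing as a block of $M'$ which dominates every component $\overline B_i$ of $\overline M$ lying above $B$, with consistent restrictions to boundary JSJ tori. Above each $\overline B_i$, a copy of $\widetilde B$ sitting inside $M'$ then serves as the restriction of $\overline M' \to \overline M$ to $\overline B_i$ (using $\pi_1 \widetilde B \subset \pi_1 \overline B_i$), and these pieces glue along matching boundary tori to embed $\overline M'$ into $M'$.

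First I would collect subgroup data. For each block $B$ with components $\overline B_1,\ldots,\overline B_{n_B}$ of $\overline M$ above $B$, let $N_B \trianglelefteq \pi_1 B$ be the intersection of the normal cores of the finite-index subgroups $\pi_1 \overline B_i$. For each JSJ torus $T$ of $M$ with covers $\overline T_1,\ldots$ appearing in $\overline M$, define $K_T \subset \pi_1 T$ to be the intersection of the finite-index subgroups $\pi_1 \overline T_j$, the restrictions $N_B \cap \pi_1 T$ for each block $B$ adjacent to $T$, and, for each adjacent hyperbolic block, the subgroup $\pi_1 \widehat T$ from Lemma~\ref{thm:hyp_covers}. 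Since $\pi_1 T \cong \mathbb{Z}^2$ is abelian and we intersect finitely many finite-index subgroups, $K_T$ is finite-index.

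Next I would realize $K_T$ as the boundary restriction of a finite cover of each adjacent block whose fundamental group lies inside $N_B$. For a hyperbolic block $M^h$, Lemma~\ref{thm:hyp_covers} applied with the tori $\{K_T\}$ (permissible since $K_T \subset \pi_1 \widehat T$) yields a cover $\widetilde{M^h}_0$ with $\pi_1 \widetilde{M^h}_0 \cap \pi_1 T = K_T$. Intersecting with $N_{M^h}$ gives a further cover $\widetilde{M^h}$ with $\pi_1 \widetilde{M^h} \subset N_{M^h}$; since $K_T \subset N_{M^h} \cap \pi_1 T$ by design, the intersection with the boundary remains exactly $K_T$. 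For a simple Seifert-fibred component $E = F \times S^1$ of a graph manifold block, the retractions $\rho_i \colon \pi_1 E \to \pi_1 T_i$ onto each boundary torus let us form $\bigcap_i \rho_i^{-1}(K_{T_i})$, a finite-index subgroup restricting to $K_{T_i}$ on each boundary; intersecting with $N_E$ again preserves the boundary. The resulting covers $\widetilde B$ then assemble along their $K_T$-tori into a finite cover $M' \to M$, since each JSJ torus carries consistent boundary restrictions from both sides.

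The main obstacle is the interlocking compatibility at JSJ tori: the single subgroup $K_T$ must simultaneously lie inside every $\pi_1 \overline T_j$, every $N_B \cap \pi_1 T$ from both sides of $T$, and every $\pi_1 \widehat T$, while still being realizable as a boundary restriction of finite covers of both adjacent blocks. The abelian nature of $\pi_1 T$ makes the intersection harmless, and Lemma~\ref{thm:hyp_covers} (for hyperbolic blocks) together with the retraction afforded by simplicity (for Seifert-fibred components) supplies precisely the freedom to match any prescribed boundary. Once $M'$ is built, the cover $\overline M'$ is assembled by substituting a copy of $\widetilde B$ in $M'$ for each $\overline B_i$ and gluing along $K_T$-tori; the containment $\pi_1 \widetilde B \subset N_B \subset \pi_1 \overline B_i$ ensures each piece is a legitimate cover of the corresponding component of $\overline M$, and the resulting $\overline M' \hookrightarrow M'$ is the embedded finite cover sought.
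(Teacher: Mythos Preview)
Your overall strategy coincides with the paper's: build finite covers of each block with matching restrictions to the JSJ tori, glue them into a cover of $M$, and locate a cover of $\overline M$ inside. The paper packages the first step into Lemma~\ref{lem:Hempel} and then obtains $\overline M'$ more cleanly as the pullback of $\widehat M\to M$ along $\overline M\to M$, but the underlying idea is the same. However, there is a genuine gap in your handling of the simple Seifert-fibred components. You assert that $\bigcap_i \rho_i^{-1}(K_{T_i})$ restricts to exactly $K_{T_j}$ on the boundary torus $T_j$. The containment in $K_{T_j}$ does follow from the $i=j$ term, since $\rho_j|_{\pi_1 T_j}=\mathrm{id}$. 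But for $i\neq j$ the restriction $\rho_i|_{\pi_1 T_j}\colon \pi_1 T_j \to \pi_1 T_i$ is typically nontrivial (for instance, the retraction of the base surface onto $c_i$ usually sends $c_j$ to a nonzero power of $c_i$), so $\rho_i^{-1}(K_{T_i})\cap \pi_1 T_j$ is in general a \emph{proper} subgroup of $\pi_1 T_j$ that need not contain $K_{T_j}$. Your Seifert cover then restricts on $T_j$ to something strictly smaller than $K_{T_j}$ and cannot be glued to the hyperbolic block on the other side, whose boundary you arranged to be exactly $K_{T_j}$; iterating to shrink the $K_T$'s has no reason to stabilise.

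The repair is exactly the device behind Lemma~\ref{lem:Hempel}: replace every $K_T$ by the subgroup $m\Z\times m\Z\subset\pi_1 T$ for a single integer $m$ chosen so that $m\Z\times m\Z$ lies inside each of your original $K_T$'s (and inside each $\pi_1\widehat T$). Because any homomorphism $\Z^2\to\Z^2$ carries $m\Z^2$ into $m\Z^2$, one has $\rho_i^{-1}(m\Z\times m\Z)\cap\pi_1 T_j\supset m\Z\times m\Z$ for all $i,j$, and the retraction argument now yields equality on every boundary torus. With this correction your construction and the paper's agree.
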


\begin{proof}[Proof of Proposition~\ref{prop:partial}]
By Lemma~\ref{lem:Hempel}, there is a cover $\widehat{M}$ of $M$
such that for each transitional torus $T$ of $M$ all of the tori $\widehat{T}\subset \widehat{M}$ covering $T$ are isomorphic and factor through all the covers of $T$ in $\overline{M}$.

Let $\overline M'\rightarrow \overline M$ be the pullback of the cover $\widehat{M}\rightarrow M$ via the semicover $\overline M\rightarrow M$.
Then on all of its boundary tori the semicover $\overline{M'}\rightarrow M$ restricts to the corresponding tori in $\widehat{M}$. Gluing $\overline{M'}$ with appropriately many copies of the components of $\widehat{M}$ extends $\overline{M'}$ to a cover $M'$ of $M$.
\end{proof}

\section{Surface-injective semicovers}
\label{sec:injective}
In this section we construct a family of semicovers of $M$ to which a given surface $S\subset M$ lifts. We keep the assumptions from Section~\ref{sec:reduction}.

We will use the following case of a theorem of Mart{\'{\i}}nez-Pedroza:

\begin{thm}[{\cite[Thm 1.1]{MP}}]
\label{thm:Eduardo}
Let $S_0\subset M^h$ be an incompressible geometrically finite surface properly
embedded in a hyperbolic manifold $M^h$.
Let $\partial S_0 = C_1 \sqcup \ldots \sqcup C_k$
  and suppose these circles are contained in
boundary tori $T_1,\ldots, T_k$ of $M^h$ (some $T_i$ may coincide).
Then for almost all cyclic covers $T'_i$ of $T_i$ to which $C_i$ lift, the
graph of spaces obtained by amalgamating $S_0$ with $T'_i$ along $C_i$
maps $\pi_1$--injectively into $M^h$ and the image of its $\pi_1$ in $\pi_1M^h$ is relatively quasiconvex.
\end{thm}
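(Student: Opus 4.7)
The plan is to deduce this from a combination theorem for relatively quasiconvex subgroups of relatively hyperbolic groups, applied in the Dehn-filling regime. Since $M^h$ is geometrically finite, $G := \pi_1 M^h$ is hyperbolic relative to the collection of peripheral cusp subgroups $\{P_i = \pi_1 T_i\}$. The geometric finiteness of $S_0$ makes $H := \pi_1 S_0$ relatively quasiconvex in $G$, with induced peripheral structure given by the cyclic boundary subgroups $\langle C_i \rangle$, each equal up to conjugation to an intersection $H \cap g_i P_i g_i^{-1}$.

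The object to be embedded is
\[
G_0 := H \ast_{\langle C_1 \rangle} \pi_1 T'_1 \ast \cdots \ast_{\langle C_k \rangle} \pi_1 T'_k,
\]
with HNN identifications in place of amalgamations whenever two boundary circles lie on the same torus. The goal is an embedding $G_0 \hookrightarrow G$ with relatively quasiconvex image. I would invoke a combination theorem of the following shape: if $A \subset G$ is relatively quasiconvex, $P$ is a peripheral subgroup, $P' \leq P$ a subgroup of finite index containing $A \cap P$, and $P'$ avoids a certain finite list of ``forbidden'' elements arising from how conjugates of $P$ meet $A$, then the amalgamation $A \ast_{A \cap P} P'$ embeds into $G$ as a relatively quasiconvex subgroup.

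Verifying the hypothesis is the heart of the argument, and is where the ``almost all'' phrasing enters. Relative quasiconvexity of $H$ implies that only finitely many double cosets $H g P_i$ satisfy $H \cap g P_i g^{-1}$ being infinite. The diagonal double coset contributes $\langle C_i \rangle$ itself, while the remaining ones contribute a finite collection of cyclic subgroups $L_{i,\alpha} \subset P_i$, pointing in directions distinct from $C_i$ in the lattice $P_i \cong \Z \oplus \Z$. A cyclic cover $P'_i$ corresponding to $\langle C_i, n D_i \rangle$ for some transverse generator $D_i$ and large integer $n$ avoids the forbidden elements produced by the $L_{i,\alpha}$'s once $n$ exceeds a uniform bound, leaving only finitely many bad values of $n$ to exclude.

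Iterating the combination theorem over $i = 1, \ldots, k$, each step taking the previous relatively quasiconvex output as the new ``$A$'', yields the conclusion. The main obstacle is ensuring the bookkeeping survives iteration: at each stage one must check that the updated list of forbidden elements for the next peripheral $P_{i+1}$ remains finite, which amounts to controlling how the new peripheral structure of the amalgam interacts with the remaining unmodified cusps. A suitable form of the combination theorem producing a predictable new peripheral structure on the output lets this proceed by induction and yields the ``almost all cyclic covers'' statement.
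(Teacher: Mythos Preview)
The paper does not prove this statement at all: it is recorded as a citation of \cite[Thm~1.1]{MP} and used as a black box. So there is no ``paper's own proof'' to compare against.

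That said, your sketch is a fair account of how the stated geometric corollary is extracted from Mart{\'{\i}}nez-Pedroza's general combination theorem. The ingredients you list are the right ones: relative hyperbolicity of $\pi_1 M^h$ with respect to the cusp subgroups, relative quasiconvexity of $\pi_1 S_0$ coming from geometric finiteness, and the combination statement that for $P'\leq P$ containing $H\cap P$ and missing a finite set of short elements of $P$, the amalgam $H*_{H\cap P}P'$ embeds as a relatively quasiconvex subgroup. The ``almost all cyclic covers'' clause is exactly the translation to $\Z^2$ peripherals of the requirement that elements of $P'\setminus H$ be sufficiently long, and you identify this correctly.

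One caution on your iteration step: the combination theorem in \cite{MP} is stated for a single parabolic $P$ at a time, and after amalgamating with $P'_1$ the resulting subgroup has a new induced peripheral structure in which $P'_1$ is now fully parabolic. You need to verify that at each subsequent stage the intersection of the running amalgam with the next $P_i$ is still just $\langle C_i\rangle$ (and not accidentally enlarged by the previously adjoined $P'_j$), and that the finite exceptional set for $P_i$ is unaffected. This is true, essentially because distinct cusps are non-conjugate and the $P'_j$ are contained in their own parabolics, but it deserves one explicit sentence rather than being folded into ``bookkeeping.'' With that clarification your outline is correct.
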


The separability of double cosets of relatively quasiconvex subgroups of $\pi_1$ of a hyperbolic
$3$--manifold with boundary was established in \cite[Thm 16.23]{Hier}. Consequently, we have:

\begin{cor}
\label{cor:sep_of_quasiconvex_surfaces_with_tori}
For almost all cyclic covers $T_i'$ described in Theorem~\ref{thm:Eduardo},
the group $\pi_1(S_0\sqcup_{\{C_i\}}\{T'_i\})$ is separable
in $\pi_1M^h$.
\end{cor}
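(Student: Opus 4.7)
The statement is a direct synthesis of the two results that immediately precede it, so the proof plan is short. Let $H := \pi_1\bigl(S_0 \sqcup_{\{C_i\}} \{T'_i\}\bigr)$ and $G := \pi_1 M^h$. I would prove the corollary in two steps: first use Theorem~\ref{thm:Eduardo} to identify $H$ with a relatively quasiconvex subgroup of $G$, and then invoke the double-coset separability result \cite[Thm 16.23]{Hier} to conclude that $H$ is separable in $G$.

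For the first step, I would fix the almost-all family of cyclic covers $\{T'_i\}$ that are provided by Theorem~\ref{thm:Eduardo}. By that theorem, the graph-of-spaces obtained by amalgamating $S_0$ with the $T'_i$ along the circles $C_i$ has fundamental group mapping $\pi_1$-injectively into $G$, with image that is relatively quasiconvex with respect to the natural peripheral structure on $\pi_1 M^h$ (as a group hyperbolic relative to its cusp/boundary torus subgroups). Thus $H$ may be identified with its image, a relatively quasiconvex subgroup of $G$.

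For the second step, since $M^h$ is a hyperbolic $3$-manifold with boundary, \cite[Thm 16.23]{Hier} asserts that double cosets of relatively quasiconvex subgroups of $\pi_1 M^h$ are separable. In particular, specializing to the trivial double coset $H\cdot 1\cdot H = H$ (or simply using that double coset separability subsumes subgroup separability via the finite-index maneuverability recalled in Section~\ref{sec:reduction}), we obtain that $H$ is separable in $G$, as desired.

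The main obstacle in this argument is hidden inside the two black-boxed theorems: Martínez-Pedroza's result supplies the relative quasiconvexity on an almost-all set of cover tuples, and the hierarchy/virtual-specialness machinery of \cite{Hier} is what promotes relative quasiconvexity to separability. Once both are invoked the corollary is immediate; the only check is that the almost-all family appearing in the hypothesis of the corollary is precisely the one produced by Theorem~\ref{thm:Eduardo}, so the two invocations are compatible without any further restriction on the covers $T'_i$.
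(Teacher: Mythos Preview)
Your proposal is correct and matches the paper's own argument essentially verbatim: the paper simply notes that \cite[Thm 16.23]{Hier} gives separability of (double cosets of) relatively quasiconvex subgroups and writes ``Consequently, we have'' before stating the corollary, which is exactly your two-step combination of Theorem~\ref{thm:Eduardo} with \cite[Thm 16.23]{Hier}.
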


\begin{cor}
\label{cor:double_sep_of_quasiconvex}
The subgroup $\pi_1S_0$ as well as the double cosets $\pi_1S_0\pi_1T_i$ are separable in $\pi_1M^h$.
\end{cor}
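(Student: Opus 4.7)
The plan is to deduce Corollary~\ref{cor:double_sep_of_quasiconvex} as a direct application of \cite[Thm 16.23]{Hier}, the double-coset separability theorem invoked in the paragraph preceding Corollary~\ref{cor:sep_of_quasiconvex_surfaces_with_tori}. Recall that $\pi_1M^h$ is hyperbolic relative to the cusp subgroups $\{\pi_1 T\}$ coming from the boundary tori of $M^h$, and the cited theorem gives separability of $H_1H_2$ in $\pi_1M^h$ whenever $H_1,H_2$ are relatively quasiconvex subgroups with respect to this peripheral structure.

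First I would verify relative quasiconvexity of the two subgroups appearing in the statement. Each peripheral $\pi_1T_i$ is one of the cusp subgroups, hence relatively quasiconvex by definition. For $\pi_1S_0$ I would invoke the standard equivalence (due to Hruska) that a finitely generated subgroup of a geometrically finite Kleinian group is relatively quasiconvex relative to its maximal parabolic subgroups if and only if it is itself geometrically finite as a Kleinian group; the hypothesis on $S_0$ in Theorem~\ref{thm:Eduardo} supplies exactly this geometric finiteness.

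With both ingredients in hand, the separability of $\pi_1S_0\,\pi_1T_i$ in $\pi_1M^h$ is immediate from \cite[Thm 16.23]{Hier} applied with $H_1=\pi_1S_0$ and $H_2=\pi_1T_i$. Taking instead $H_2=\{1\}$ (which is trivially relatively quasiconvex) in the same theorem yields separability of the ``double coset'' $\pi_1S_0\cdot\{1\}=\pi_1S_0$, giving the remaining claim.

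I do not anticipate a substantive obstacle. The only step carrying mathematical content is the relative quasiconvexity of $\pi_1S_0$, which is standard for geometrically finite surface subgroups; everything else is a mechanical application of the theorem of \cite{Hier} already invoked. A less direct route would use Corollary~\ref{cor:sep_of_quasiconvex_surfaces_with_tori} together with the observation that, as the cyclic covers $T_i'$ shrink through higher-index choices, the subgroups $\pi_1(S_0\sqcup_{\{C_i\}}\{T_i'\})$ decrease to $\pi_1S_0$, allowing one to separate any $g\notin\pi_1S_0$ first from some enclosing $G'$ and hence from $\pi_1S_0$; but this detour adds steps and does not produce the double-coset statement as cleanly.
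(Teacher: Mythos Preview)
Your proposal is correct and matches the paper's intended argument: the paper gives no explicit proof but places both Corollary~\ref{cor:sep_of_quasiconvex_surfaces_with_tori} and Corollary~\ref{cor:double_sep_of_quasiconvex} immediately after the sentence ``The separability of double cosets of relatively quasiconvex subgroups \ldots\ was established in \cite[Thm 16.23]{Hier}. Consequently, we have:'', so both are meant to be direct applications of that theorem to the relatively quasiconvex subgroups at hand. Your identification of relative quasiconvexity of $\pi_1S_0$ via the geometrically-finite hypothesis of Theorem~\ref{thm:Eduardo} (and Hruska's characterization) is exactly the implicit step, and taking $H_2=\{1\}$ or $H_2=\pi_1T_i$ in \cite[Thm 16.23]{Hier} is the intended deduction.
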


\begin{defin}
Let $S\subset M$ be an incompressible surface.
A semicover $\overline{M}\rightarrow M$ to which $S$ lifts is \emph{$S$--injective} if
for each hyperbolic or graph manifold block $\overline{B}$ of $\overline{M}$ the intersection $S\cap\overline{B}$ is connected.
We allow $S$ itself to be disconnected.
\end{defin}

\begin{lemma}[{\cite[Constr 4.13]{PW1}}]
\label{lem:constr}
Let $S\subset M^g$ be a possibly disconnected incompressible surface in a graph manifold.
Suppose $n$ is an integer divisible by all of the degrees of (possibly disconnected) covers $S\cap E\rightarrow F$, where $E\subset M^g$ is a Seifert-fibred component with base surface $F$, and $S\cap E$ is horizontal. Then there is a finite cover $\overline{M^g}$ of $M^g$ to which $S$ lifts such that for each torus
$\overline T\subset \partial \overline {M^g}$ intersecting $S$:
\begin{itemize}
\item
$S\cap \overline T$ is connected,
\item
$\overline T$ maps to a torus $T\subset\partial M^g$ with degree $\frac{n}{|S\cap T|}$.
\end{itemize}
\end{lemma}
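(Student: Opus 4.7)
The plan is to construct $\overline{M^g}$ block-by-block on the Seifert-fibred components of $M^g$, then glue the block covers along the JSJ tori of $M^g$. The key enabling fact is the simplicity of $M^g$: each Seifert-fibred component $E$ is a product $F_E\times S^1$ with $F_E$ having at least two boundary components, so $\pi_1 E$ retracts onto each boundary torus subgroup, and consequently any prescribed compatible collection of boundary-torus covers can be realized as the boundary behavior of some finite cover of $E$.

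First, for every boundary or JSJ torus $T$ of $M^g$ meeting $S$, I fix the target cover $\overline T\to T$: it has degree $n/c$ where $c=|S\cap T|$, and corresponds to the sublattice of $\pi_1 T=\Z^2$ generated by $(n/c)s$ together with a primitive transverse element, where $s$ is the common primitive slope of the $c$ parallel components of $S\cap T$. By a direct slope calculation, the preimage in $\overline T$ of each individual component of $S\cap T$ is a single circle wrapping $n/c$ times, so the boundary condition of the lemma is built into $\overline T$.

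Next, for each Seifert-fibred component $E$ I would build a cover $\widetilde E\to E$ that induces the prescribed $\overline T$ on every boundary torus $T\subset\partial E$ meeting $S$ and to which $S\cap E$ lifts. In the horizontal case, $\pi_1(S\cap E)$ sits in $\pi_1 E=\pi_1 F_E\times\Z$ as the graph of a homomorphism on the index-$d_E$ subgroup $\pi_1\Sigma_E\subset\pi_1 F_E$, where $\Sigma_E=S\cap E$; I would define $\widetilde E$ by the subgroup of $\pi_1 E$ generated by $\pi_1(S\cap E)$ and a suitable multiple of the fiber, adjusted so that the restriction to each boundary torus of $E$ is exactly the prescribed sublattice. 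In the vertical case, $S\cap E$ is a disjoint union of saturated annuli whose fundamental groups lie inside the boundary-torus subgroups, so the retraction principle directly produces $\widetilde E$ with the prescribed boundary covers and the annuli lift automatically. I then glue the $\widetilde E$ along the JSJ tori of $M^g$: all of them induce the common cover $\overline T$ over each shared $T$, so combinatorial sheet-matchings exist; choosing them compatibly with the single boundary circle of $S$ contributed by each $\overline T$ produces an embedded lift of $S$. Finally, passing to a further regular cover if necessary gives the desired $\overline{M^g}$.

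The main obstacle is the horizontal-block step: one must check that the divisibility hypothesis $d_E\mid n$, combined with the fact that the components of $S\cap T_i$ all share a single primitive slope, allows one to choose the diagonal subgroup of $\pi_1 E=\pi_1 F_E\times\Z$ so that its restriction to each boundary torus $T_i$ simultaneously matches the prescribed $\overline{T_i}$. This reduces to a finite set of linear compatibility conditions on $\Z^2$-lattices, soluble precisely because $n$ absorbs the degrees $d_E$ and the boundary counts $c_i=|S\cap T_i|$; once these are handled, the remaining arguments are routine applications of the retraction principle and torus-cover gluing.
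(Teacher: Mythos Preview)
The paper does not prove this lemma; it is imported verbatim from \cite[Constr~4.13]{PW1} and used as a black box in the proof of Proposition~\ref{prop:surf_injective_exist}. So there is no in-paper argument to compare yours against.

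On its own merits, your block-by-block strategy is the natural one and is presumably close in spirit to the construction in \cite{PW1}. Two points, however, need attention.

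First, your final sentence (``passing to a further regular cover if necessary'') is not harmless here: the conclusion pins down the \emph{exact} degree $n/|S\cap T|$ of every boundary torus $\overline T$ meeting $S$, and any further cover would multiply that degree. Whatever you glue together must already be the finished $\overline{M^g}$; there is no room for a clean-up pass.

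Second, the horizontal-block step is underspecified. You propose to take the subgroup of $\pi_1 E$ ``generated by $\pi_1(S\cap E)$ and a suitable multiple of the fiber,'' but $S\cap E$ may be disconnected, so this is several subgroups rather than one, and you have not said how the multiple of the fiber is chosen so that \emph{every} boundary torus of $\widetilde E$ --- including those missing the lift of $S$ --- has a prescribed type. This matters because the gluing along an internal JSJ torus must be an honest covering on every sheet, so the collections of torus-covers contributed by the two adjacent $\widetilde E$'s must match as multisets, not merely on the sheets that $S$ touches. Sorting out this bookkeeping (and arranging that the $c=|S\cap T|$ boundary circles of $S$ land on $c$ distinct tori $\overline T$ rather than on one) is precisely what makes \cite[Constr~4.13]{PW1} a construction rather than a remark.
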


Here $|S\cap T|$ denotes the number of components
in the intersection of the surface $S$ with the torus $T$.

\begin{prop}
\label{prop:surf_injective_exist}
Let $S\subset M$ be an incompressible surface. Let $S_0$ be a component of intersection of
$S$ with a hyperbolic or graph manifold block $M_0$ of $M$. Let $T_i$ be the (possibly repeating) tori of $\partial M_0$
intersected by $S_0$. Let $g\in \pi_1 M_0-\pi_1S_0$ (resp.\  $g_i\in\pi_1M_0-\pi_1S_0\pi_1T_i$ for each $i$). Then there is
a finite $S$--injective semicover
$\overline M$ with $g\notin \pi_1\overline{M_0}$ (resp.\ $g_i\notin\pi_1\overline{M_0}\pi_1T_i$), where $\overline{M_0}$ is the block of $\overline{M}$ containing the lift of $S_0$.
\end{prop}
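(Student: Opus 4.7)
The plan is to build $\overline{M_0}$ realising the prescribed separation on the block $M_0$ while meeting $S$ in a single connected piece, and then to extend it to an $S$--injective semicover of all of $M$ by iterating analogous constructions on the remaining blocks. The two cases $M_0$ hyperbolic and $M_0$ a graph manifold block require different ingredients for the first stage.

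If $M_0$ is hyperbolic, I would apply Corollary~\ref{cor:double_sep_of_quasiconvex} to find a finite-index subgroup $H \subset \pi_1 M_0$ containing $\pi_1 S_0$ and excluding $g$; for the respective version, one intersects the subgroups separating each $g_i$ from the double coset $\pi_1 S_0 \pi_1 T_i$ over all $i$. To force the corresponding cover $\overline{M_0}$ to meet $S$ in a single connected piece, I would further intersect $H$ with subgroups separating each of the finitely many other components $S_0' \subset S\cap M_0$ from each element of $H$ representing an unwanted lift; each such $S_0'$ is also incompressible and relatively quasiconvex, so Corollary~\ref{cor:double_sep_of_quasiconvex} again applies, and the refinement preserves both $\pi_1S_0\subset H$ and the exclusion of $g$ (resp.\ $g_i$).

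If $M_0$ is a graph manifold block, I would first apply Lemma~\ref{lem:constr} with $n$ sufficiently divisible to obtain a cover in which every boundary torus intersects $S$ in exactly one circle; the components of the preimage of $S$ then correspond bijectively to the components of $S\cap M_0$, so only one lift of $S_0$ appears in the block containing it. Combining this with the separability of $\pi_1S_0$ and of the double cosets $\pi_1S_0\pi_1T_i$ in $\pi_1M_0$ --- cited from \cite[Thm 1.2]{PW1} in the overview --- I would intersect with a finite-index subgroup excluding $g$ (resp.\ separating each $g_i$). A subsequent application of Lemma~\ref{lem:constr} with even more divisible $n$ then restores the single-circle property on the boundary while preserving the separation, since passing to a further cover only shrinks the relevant subgroups.

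To extend $\overline{M_0}$ across the JSJ decomposition, I would apply the same two procedures to every other block $M_1$ --- with the connectedness requirement but no separation condition --- producing $\overline{M_1}$ with $S\cap\overline{M_1}$ connected. The flexibility in Lemma~\ref{thm:hyp_covers} and in the choice of $n$ in Lemma~\ref{lem:constr} lets me pass, on each shared transitional torus, to further common refinements so that the boundary covers prescribed by $\overline{M_0}$ and by $\overline{M_1}$ agree, exactly as in the proof of Lemma~\ref{lem:Hempel}. Iterating across the finitely many blocks yields the required finite $S$--injective semicover $\overline{M}$. The principal obstacle is precisely this synchronisation: each adjustment on one block may force a compensating refinement on its neighbour, and the argument terminates only because Lemma~\ref{lem:constr} and Lemma~\ref{thm:hyp_covers} together allow all tori of $\overline{M}$ over a given transitional torus of $M$ to be replaced by copies of a single, arbitrarily divisible, common cover without destroying the separation achieved at $M_0$.
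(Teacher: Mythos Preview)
Your proposal has a genuine gap in how it attempts to achieve $S$--injectivity. You try to enforce it block-by-block, by refining the subgroup $H\subset\pi_1 M_0$ so that the resulting cover $\overline{M_0}$ ``meets $S$ in a single connected piece''. But $S$--injectivity is a global property of how the lifted surface $S\hookrightarrow\overline{M}$ sits across \emph{all} the blocks: whether the lift of $S$ revisits $\overline{M_0}$ through some other component $S_0'\subset S\cap M_0$ is determined by the gluing to neighbouring blocks, not by the choice of $H$. Your step ``intersect $H$ with subgroups separating each $S_0'$ from each element of $H$ representing an unwanted lift'' is not well-defined, and in any case shrinking $H$ yields a \emph{larger} cover with more, not fewer, lifts of each $S_0'$.

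The paper avoids this entirely by reversing the order of construction: it starts with a copy $\overline{S}$ of $S$, attaches to each circle of $S\cap T$ a cyclic cover $\overline{T}\to T$ of a single globally chosen degree $n/|S\cap T|$, and then fills in one block-cover around each component of $S\cap M^h$ or $S\cap M^g$. Thus each block of $\overline{M}$ contains exactly one component of $S$ by construction, and your ``principal obstacle'' of synchronisation disappears because the single integer $n$ (chosen to satisfy conditions~(a)--(d)) fixes all the torus degrees at once. Two ingredients that you do not invoke are essential here. First, for non-fibered hyperbolic blocks one needs Corollary~\ref{cor:sep_of_quasiconvex_surfaces_with_tori} (separability of $\pi_1(S_0\sqcup_{\{C_i\}}\{T_i'\})$, not merely of $\pi_1 S_0$) to find a cover of $M^h$ into which $S_0$ \emph{together with the prescribed tori} $\overline{T}$ all embed; condition~(d), coming from Theorem~\ref{thm:Eduardo}, is what makes this possible. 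Second, the separation of $g$ (resp.\ $g_i$) is decoupled from the $S$--injective construction and imposed only at the end, by replacing $\overline{M_0}$ with its fiber product with a separately constructed $\overline{M_0^*}$; condition~(c) on $n$ guarantees that this fiber product does not alter the boundary tori meeting $S_0$, so the rest of $\overline{M}$ is undisturbed.
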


To make sense of the double cosets $\pi_1S_0\pi_1T_i$ inside $\pi_1M_0$, pick basepoints $x_i$ of $M_0$ in $C_i$ and interpret
$\pi_1S_0,\pi_1T_i$ as subgroups of $\pi_1M_0$ determined by loops based at $x_i$ staying in $S_0,T_i$, respectively.

\begin{proof}
In the case where we assume $g\notin \pi_1 S_0$, we use that $\pi_1 S_0$ is separable
in $\pi_1 M_0$. If $M_0$ is hyperbolic, this follows from Corollary~\ref{cor:double_sep_of_quasiconvex}. If $M_0$ is a graph manifold, we use separability of embedded surfaces in graph
manifolds \cite[Thm 1.1]{PW1}. Hence there is a finite cover $\overline{M^*_0}\rightarrow M_0$ to which $S_0$ lifts with $g\notin \pi_1\overline{M^*_0}$.

In the case where we assume $g_i\notin \pi_1S_0\pi_1T_i$ for all $i$, we use that each double coset $\pi_1S_0\pi_1T_i$ is separable
in $\pi_1M_0$. This follows from Corollary~\ref{cor:double_sep_of_quasiconvex} and \cite[Thm 1.2]{PW1}. Hence there exists a cover
$\overline{M^*_0}\rightarrow M_0$ to which $S_0$ lifts with $g_i\notin \pi_1\overline{M^*_0}\pi_1T_i$.
Let $n_i$ be the degree of the restriction of $\overline{M^*_0}\rightarrow M_0$  to the torus intersecting (the lift of) $S_0$ along (the lift of) $C_i$.

Choose $n$ so that it is divisible by the numbers in (a)--(c), and also satisfies (d):

\begin{enumerate}[(a)]
\item
every $|S\cap T|$, where $T$ is a transitional or boundary torus,
\item
 the degrees of (possibly disconnected) covers $S\cap E\rightarrow F$, where $E\subset M$ is a Seifert-fibred component
  with base surface $F$, and $S\cap E$ is horizontal,
\item
each $n_i|S\cap T_i|$ as above.
\item
We also require $\frac{n}{|S\cap T|}$ to be the degree of one of the covers $T'\rightarrow T$ given by Theorem~\ref{thm:Eduardo} for a geometrically finite component of $S\cap M^h$ in a hyperbolic block $M^h$ of $M$.
\end{enumerate}

We construct the semicover $\overline M$ in the following way.
Start with a copy $\overline S$ of $S$. Let $T$ be a transitional or boundary torus of $M$.
For each component of $S\cap T$ we attach along the corresponding circle in $\overline{S}$ the degree $\frac{n}{|S\cap T|}$ cyclic cover $\overline T$ of $T$. The value $\frac{n}{|S\cap T|}$ is an integer by~(a).

For each graph manifold block $M^g$ of $M$ consider the finite (possibly disconnected)
cover $\overline{M^g}$ from Lemma~\ref{lem:constr} applied to the surface $S\cap M^g$.
The boundary components of $\overline{M^g}$ intersecting $S$ coincide with the $\overline{T}$ attached to $\overline S$ above.

Consider now a hyperbolic block $M^h$ of $M$ such that $S\cap M^h$ is a union of fibers. In this case we choose $\overline{M^h}$ to be the
union of $|S\cap M^h|$ copies of degree $\frac{n}{|S\cap M^h|}$ cyclic covers of $M^h$ to which components of $S\cap M^h$ lift. Again,
components of $\partial\overline{M^h}$ coincide with $\overline{T}$,
so that we can consistently attach the $\overline{M^h}$ to $\overline S$.

Finally, if $S\cap M^h$ is not a union of fibers, then $\pi_1$ of each of its components is relatively quasiconvex in $\pi_1 M^h$, so
by~(d) and Corollary~\ref{cor:sep_of_quasiconvex_surfaces_with_tori}, there is a finite cover $\overline{M^h}$ extending
$S\cap M^h \cup \{\overline T\}$,
and we consistently attach the $\overline M^h$ to $\overline S$.

At this point we have constructed a finite $S$--injective semicover $\overline{M}$, without yet separating $g$ (resp.\ $g_i$). Now we replace the block $\overline{M_0}$ with its fiber product with $\overline{M^*_0}$. (Algebraically $\pi_1$ of the \emph{fiber product} is $\pi_1\overline{M_0}\cap\pi_1\overline{M^*_0}\subset \pi_1 M_0$.) This is possible by (c) which guarantees that the fiber product agrees with $\overline{M_0}$ on its boundary components intersecting $S_0$. After this replacement, $\overline{M}$ satisfies the requirement on $g$ (resp.\ $g_i$), by definition of $\overline{M^*_0}$.
\end{proof}

\section{Separability}
\label{sec:separability}
\begin{proof}[Proof of Theorem~\ref{thm:main}]
Choose a basepoint of $M$ in $S$ outside all JSJ and boundary tori. Let $f\in \pi_1M-\pi_1S$.
Consider the based cover $M^S$ of $M$ with fundamental group $\pi_1S$. Let $\gamma^S$ be a path in
$M^S$ starting at the basepoint
and representing  $f$. Then $\gamma^S$ does not terminate on $S$.
Assume that $\gamma^S$ is chosen so that it does not \emph{backtrack}, i.e.\ its image in $M$ intersects
the transitional tori a minimal number of times.

Firstly, consider the case where $\gamma^S$ terminates in a block $M^S_0\subset M^S$
that intersects the lift of $S$. Denote $S_0=S\cap M_0^S$ and let $M_0\subset M$ be the block covered by $M^S_0$. In the case where $S_0$ contains the basepoint, let $g\in \pi_1M_0$ be an element represented by a path in $M^S_0$ from the basepoint to the endpoint of $\gamma^S$.

By Proposition~\ref{prop:surf_injective_exist} there is a finite $S$--injective semicover $\overline{M}$ of $M$ with
$g\notin \pi_1 \overline{M_0}$.
Thus $\gamma^S$  projects to a path $\overline{\gamma}$ in $\overline{M}$ that
ends in $\overline{M}_0$ outside the lift of $S_0$.
By Proposition~\ref{prop:partial} the semicover $\overline{M}$ has a finite cover $\overline{M}'$ that extends to a finite
cover $M'$ of $M$. Since the endpoint of the lift of $\overline \gamma$ to $M'$, which lies
in $\overline{M}'$, does not terminate on the based connected component of the preimage of $S$, we have $f\notin \pi_1 M'\pi_1S$, as desired.

Secondly, consider the case where $\gamma^S$ terminates in a block of $M^S$ disjoint from the lift of $S$.
Let $T^S\subset M^S$ be then the first
connected component of the preimage of a transitional torus $T\subset M$ crossed by $\gamma^S$ and disjoint from $S$.
Let $M^S_0$ be the last block that $\gamma^S$ travels through before it hits
$T^S$. Let $S_0=S\cap M_0^S$ and let $M_0\subset M$ be the block covered by $M^S_0$. If $T$ coincides with one of the tori $T_i\subset M_0$ crossed by $S_0$ along $C_i$, then let $x_i\in C_i$ be a basepoint for $M_0$. Let $x_i'$ be a lift of $x_i$ in $T^S$. We keep the notation $x_i$ for the
lift of $x_i$ to $S_0\subset M^S_0$. Let $g_i\in \pi_1M_0$ be an element
represented by a path in $M^S_0$ from $x_i$ to $x_i'$.

Since $T^S$ is disjoint from $S_0$, we have $g_i\notin \pi_1 S_0 \pi_1 T_i$.
By Proposition~\ref{prop:surf_injective_exist} there is a finite $S$--injective semicover $\overline{M}$ of $M$ with
$g_i\notin \pi_1 \overline{M_0}\pi_1 T_i$ for all $i$. In other words, $\overline{\gamma}$ leaves $\overline{M_0}$ through
a torus disjoint from $S_0$.

By Proposition~\ref{prop:partial} the semicover $\overline{M}$ has a finite cover $\overline{M}'$ that extends to a finite
 cover $M'$ of $M$.
By separability of the transitional tori groups (since they are maximal abelian) and residual finiteness of the free group
(dual to transitional tori),
by replacing $M'$ with a further cover
we can assume that the lift of $\overline \gamma$ to $M'$ does not pass twice through the same transitional torus.

Let $T'\subset M'$ be the projection of $T^S$. Consider the double cover $M''$ obtained by taking two copies of
$M'$, cutting along $T'$, and regluing. Then the based connected component of the preimage of $S$ lies in one copy of (the cut) $M'$ in $M''$, while
the endpoint of the lift of $\overline\gamma$ lies in the other copy. Hence $f\notin \pi_1 M''\pi_1S$, as desired.
\end{proof}

\begin{bibdiv}
\begin{biblist}
\bib{Ahak}{article}{
   author={Agol, Ian},
   title={The virtual Haken conjecture}
   contribution={ type= {an Appendix}
                  author={Agol, Ian}
                  author={Groves, Daniel}
                  author={Manning, Jason}}
   status={preprint},
   date={2012},
   eprint={arXiv:1204.2810}}

\bib{HW}{article}{
   author={Haglund, Fr{\'e}d{\'e}ric},
   author={Wise, Daniel T.},
   title={Special cube complexes},
   journal={Geom. Funct. Anal.},
   volume={17},
   date={2008},
   number={5},
   pages={1551--1620}}

\bib{He}{article}{
   author={Hempel, John},
   title={Residual finiteness for $3$-manifolds},
   conference={
      title={Combinatorial group theory and topology},
      address={Alta, Utah},
      date={1984},
   },
   book={
      series={Ann. of Math. Stud.},
      volume={111},
      publisher={Princeton Univ. Press},
      place={Princeton, NJ},
   },
   date={1987},
   pages={379--396}
   }

\bib{FV}{article}{
   author={Friedl, Stefan},
   author={Vidussi, Stefano},
   title={A vanishing theorem for twisted Alexander polynomials with applications to symplectic $4$--manifolds},
   date={2012},
   eprint={arXiv:1205.2434}}

\bib{LW}{article}{
   author={Luecke, John},
   author={Wu, Ying-Qing},
   title={Relative Euler number and finite covers of graph manifolds},
   conference={
      title={Geometric topology},
      address={Athens, GA},
      date={1993},
   },
   book={
      series={AMS/IP Stud. Adv. Math.},
      volume={2},
      publisher={Amer. Math. Soc.},
      place={Providence, RI},
   },
   date={1997},
   pages={80--103}}

\bib{MP}{article}{
   author={Mart{\'{\i}}nez-Pedroza, Eduardo},
   title={Combination of quasiconvex subgroups of relatively hyperbolic
   groups},
   journal={Groups Geom. Dyn.},
   volume={3},
   date={2009},
   number={2},
   pages={317--342}}

\bib{PW1}{article}{
 title={Graph manifolds with boundary are virtually special},
 author={Przytycki,Piotr},
 author={Wise, Daniel T.},
 status={submitted},
 eprint={arXiv:1110.3513},
 date={2011}}


\bib{RW}{article}{
   author={Rubinstein, J. Hyam},
   author={Wang, Shicheng},
   title={$\pi_1$--injective surfaces in graph manifolds},
   journal={Comment. Math. Helv.},
   volume={73},
   date={1998},
   number={4},
   pages={499--515}}

\bib{SW}{article}{
   author={Wise, Daniel T.},
   author={Sageev, Michah},
   title={Cores for quasiconvex actions},
   date={2012},
   status={submitted},
   eprint={http://www.math.mcgill.ca/wise/papers.html}}

\bib{Sc}{article}{
   author={Scott, Peter},
   title={Subgroups of surface groups are almost geometric},
   journal={J. London Math. Soc. (2)},
   volume={17},
   date={1978},
   number={3},
   pages={555--565}}

\bib{SWil}{article}{
    AUTHOR = {Silver, Daniel S.},
    author = {Williams, Susan G.},
     TITLE = {Twisted Alexander polynomials and representation shifts},
   JOURNAL = {Bull. Lond. Math. Soc.},
    VOLUME = {41},
      date = {2009},
    NUMBER = {3},
     PAGES = {535--540}}


\bib{Hier}{article}{
   author={Wise, Daniel T.},
   title={The structure of groups with quasiconvex hierarchy},
   date={2011},
   status={submitted},
   eprint={http://www.math.mcgill.ca/wise/papers.html}}

\end{biblist}
\end{bibdiv}

\end{document}